\newtheorem{thm}{Theorem}
\newtheorem{lem}{Lemma}
\newtheorem{prop}{Proposition}
\newtheorem{definition}[equation]{Defintion}
\theoremstyle{definition}
\newtheorem{ex}{Example}
\newtheorem*{prob*}{Problem}
\DeclareMathOperator{\End}{End}
\DeclareMathOperator{\Adj}{Adj}
\DeclareMathOperator{\Der}{Der}
\DeclareMathOperator{\Cent}{Cent}
\DeclareMathOperator{\GL}{{\rm GL}}
\DeclareMathOperator{\im}{im}
\DeclareMathOperator{\sign}{sign}
\setlist[description]{leftmargin=\parindent,labelindent=\parindent}
\title{Efficient characteristic refinements for finite groups}
\author{Joshua Maglione}
\email{maglione@math.colostate.edu}
\address{
	Department of Mathematics,
	Colorado State University,
	Fort Collins, CO 80523,
	USA
}
\begin{document}

\begin{abstract}
Filters were introduced by J.B. Wilson in 2013 to generalize work of Lazard with associated graded Lie rings.
It holds promise in improving isomorphism tests, but the formulas introduced then were impractical for computation.
Here, we provide an efficient algorithm for these formulas, and we demonstrate their usefulness on several examples of $p$-groups.
\end{abstract}

\maketitle

\section{Introduction}

Isomorphism between two finite groups becomes easier when we use isomorphism invariant subgroups (i.e. characteristic subgroups) to constrain the number of possibilities. 
With this in mind, Fitting uncovered several characteristic subgroups to later be used to determine isomorphism between groups \cite{Fitting}, see the accompanying bibliography in \cite{CH:isomorphism}.
However, in the case of $p$-groups, these characteristic subgroups are usually the whole group or the trivial group.
As seen in \cite{ELGO:Auts}, the inclusion of just one new characteristic subgroup can greatly improve performance.

New sources for computable characteristic subgroups of $p$-groups were uncovered in \cite{W:Char,W:filters}.
In addition, it was shown that the inclusion of new characteristic subgroups induced more subgroups and gave formulas to automate this process of refining.
However, the formulas required an exponential amount of computation. 
In this paper, we prove that we can do this in polynomial time and we provide an implementation for {\sc Magma}. 
Indeed, even for groups of order $3^{100}$, we are able to refine a typical characteristic series by about ten-fold in just a few minutes; see Figure \ref{fig:RandomGroups} on page \pageref{fig:RandomGroups}.

A \emph{filter} for a group $G$ is a function $\phi : M\rightarrow 2^G$ from a commutative monoid $M=\langle M,+,0,\preceq\rangle$ into the normal subgroups of $G$ satisfying the following: for all $s,t\in M$
\[ [\phi_s,\phi_t]\leq \phi_{s+t} \qquad\&\qquad s\preceq t \Longrightarrow \phi_t\leq \phi_s.\]
Wilson proves \cite[Theorem 3.1]{W:Char} that each filter has an associated Lie ring:
\begin{equation}\label{eqn:Lie-ring} 
L(\phi) = \bigoplus_{s\in M-\{0\}} \phi_s/ \left\langle \phi_{s+t} \mid t\in M-\{0\}\right\rangle.
\end{equation}
The use of monoids $M$ is essential as it allows for somewhat arbitrary refinements some of which are discussed in Section \ref{refinements}.
We prove the following theorem.

\begin{thm}\label{thm:main}
Suppose $\phi:\mathbb{N}^d\rightarrow 2^G$ is a filter where $\preceq$ is the lexicographical order. 
If $H\triangleleft G$ and there exists $s\in \mathbb{N}^d$ such that 
\[ \left\langle \phi_{s+t} \mid t\in \mathbb{N}^d-\{0\}\right\rangle < H < \phi_s,\] 
then there exists a polynomial-time algorithm that refines $\phi$ to contain $H$ in its image.
\end{thm}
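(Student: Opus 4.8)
The plan is to realize the refinement by enlarging the index monoid by a single coordinate and slotting $H$ into the unique gap this creates. Observe first that $e_d:=(0,\dots,0,1)$ is the least nonzero element of $(\mathbb{N}^d,\preceq)$, so $s+t\succeq s+e_d$ for every $t\in\mathbb{N}^d-\{0\}$; hence $\la\phi_{s+t}\mid t\in\mathbb{N}^d-\{0\}\ra=\phi_{s+e_d}$ and the hypothesis is simply $\phi_{s+e_d}<H<\phi_s$, with $s+e_d$ the immediate lexicographical successor of $s$. I would pass to $\mathbb{N}^{d+1}$ with the lexicographical order, embed $\mathbb{N}^d\hookrightarrow\mathbb{N}^{d+1}$ via $a\mapsto(a,0)$, and note that $(s,1)$ then lies strictly between $(s,0)$ and $(s+e_d,0)$. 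Define $\bar\phi$ to be the filter on $\mathbb{N}^{d+1}$ generated by the assignments $(a,0)\mapsto\phi_a$ together with $(s,1)\mapsto H$; its existence, and the (computationally expensive) description of each $\bar\phi_m$ as a join of iterated commutators of the generators whose indices sum to something $\succeq m$, are provided by \cite{W:filters,W:Char}.

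Next I would verify that $\bar\phi$ is a bona fide refinement with $H$ in its image. The inclusions $\bar\phi_{(a,0)}\ge\phi_a$ and $\bar\phi_{(s,1)}\ge H$ hold because the relevant generators sit at those indices. For the reverse inclusions, note that a generating iterated commutator of $\bar\phi_{(b,0)}$ has all of its entries among the $\phi_a$ and $H$; replacing each occurrence of $H$ by $\phi_s$ (legitimate as $H\le\phi_s$) and using that $\phi$ is a filter collapses it into a single $\phi_{a'}$ with $a'\succeq b$, so $\bar\phi_{(b,0)}\le\phi_b$. Similarly, a generating commutator of $\bar\phi_{(s,1)}$ with no occurrence of $H$ collapses into $\phi_{a'}$ with $(a',0)\succeq(s,1)$, forcing $a'\succeq s+e_d$ and hence placing it inside $\phi_{s+e_d}<H$, while a commutator that does involve $H$ lies in $H$ because $H\triangleleft G$. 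Thus $\bar\phi$ restricts to $\phi$ along $\mathbb{N}^d\times\{0\}$ and $\bar\phi_{(s,1)}=H$; since $\phi_s$ and $\phi_{s+e_d}$ are consecutive values of the chain $\im\phi$ and $H$ lies strictly between them, $H\notin\im\phi$, so $\im\bar\phi\supsetneq\im\phi$ contains $H$.

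The substance of the theorem is doing this in polynomial time, and here the naive route fails: the formula of \cite{W:filters} presents $\bar\phi_m$ as a join over all iterated commutators of the (finitely many, input) generators of $\phi$ and of $H$, over all lengths and all bracketings, which is exponentially many terms. I would collapse this in two stages. First, because $\phi$ is already a filter and commutators of normal subgroups distribute over joins, every maximal subcommutator built only from $\phi$-generators is absorbed into a single $\phi_a$; then a standard reduction from arbitrary bracketings to left-normed ones shows that $\bar\phi_m$ is generated by the left-normed commutators $[\phi_a,H,\dots,H]$ (with $\ell$ copies of $H$, of index $(a+\ell s,\ell)$, ranging over those with $(a+\ell s,\ell)\succeq m$) together with the $\phi_a$ for $(a,0)\succeq m$. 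Second, because $G$ is a finite $p$-group, $\ell$ is bounded by the nilpotency class and hence by $\log_p|G|$, the indices $a$ that matter lie in the bounded region carrying the finitely many jumps of $\phi$, and $\im\bar\phi$ is a chain of at most $\log_p|G|+2$ members whose jump indices can be located by binary search in each coordinate. Consequently each required $\bar\phi_m$ is a join of polynomially many explicit commutator subgroups $[\phi_a,H,\dots,H]$, each computable in polynomial time by routine $p$-group arithmetic, and only polynomially many indices $m$ need be evaluated to output $\bar\phi$.

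I expect the last collapse to be the main obstacle: proving rigorously that the exponentially large family of iterated commutators reduces to a polynomially bounded family of bounded-length left-normed commutators of the shape $[\phi_a,H,\dots,H]$, and isolating exactly which finite set of indices represents the refined filter. By contrast, the monoid enlargement, the two strictness checks, and the individual commutator-subgroup computations are all routine.
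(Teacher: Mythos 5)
Your setup is the same as the paper's: embed $\mathbb{N}^d\hookrightarrow\mathbb{N}^{d+1}$, place $H$ at the index $(s,1)$ sitting strictly between $(s,0)$ and $(s+e_d,0)$, and take the closure of the resulting prefilter; your observation that the hypothesis reads $\partial\phi_s=\phi_{s+e_d}<H<\phi_s$ and your verification that $\overline{\pi}_{(s,1)}=H$ are correct and consistent with the worked example in Section 3.4. The gap is in the third paragraph, which you rightly identify as the substance of the theorem. Your claimed normal form --- that $\overline{\pi}_m$ is generated by the $\phi_a$ together with left-normed commutators of the special shape $[\phi_a,H,\dots,H]$ --- is not justified and is most likely false. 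Left-normed commutators with interleaved entries such as $[\phi_{a_1},H,\phi_{a_2}]=[[\phi_{a_1},H],\phi_{a_2}]$ genuinely occur in the closure formula (\ref{eqn:closure}); the three subgroups lemma gives only $[[\phi_{a_1},H],\phi_{a_2}]\leq[[H,\phi_{a_2}],\phi_{a_1}]\cdot[[\phi_{a_2},\phi_{a_1}],H]$, whose first factor is again interleaved, so the $\phi$-entries cannot be pushed into the leading slot. Likewise your ``first stage'' only absorbs \emph{initial} runs of $\phi$-generators: in $[\phi_a,H,\phi_b,\phi_c]$ the entries $\phi_b,\phi_c$ are applied one at a time to the accumulated commutator and do not form a subcommutator $[\phi_b,\phi_c]$ that could be replaced by $\phi_{b+c}$. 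Without the normal form you are left with all left-normed words of length up to the class $c$ over an alphabet of size $O(\log|G|)$, which is $(\log|G|)^{O(\log|G|)}$ terms, not polynomially many. A second, related omission: the closure typically contains \emph{new} subgroups beyond $\operatorname{im}(\phi)\cup\{H\}$ (in the paper's example, $X=[G,H]\gamma_3$ is neither a $\gamma_i$ nor $H$), and your sketch gives no mechanism for discovering these or for locating their maximal indices.

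The paper's route around both problems is different and worth internalizing. Polynomiality comes not from a commutator normal form but from the fact that $\operatorname{im}(\overline{\pi})$ is a chain of at most $\log|G|$ subgroups: Theorem \ref{thm:poly} runs a transitive closure on the set of (index, subgroup) pairs, computing $[H,K]$ for each pair of subgroups already discovered, inserting the result at index $x+y$, and merging duplicate subgroups while keeping the larger index; since at most $\log|G|$ distinct subgroups can ever appear, only $O(\log^2|G|)$ commutator subgroups are formed. Correctness uses exactly the collapse that is actually available --- whenever an initial segment $[\pi_{s_1},\dots,\pi_{s_j}]$ of a left-normed commutator equals a subgroup already in the image, the partition is replaced by $(s_1+\cdots+s_j,s_{j+1},\dots,s_k)$ --- so every $[\pi_P]$ is reached by the iteration without enumerating partitions. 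Two further pieces of machinery you would need are Theorem \ref{thm:full} (every filter can be made \emph{full}, so that each subgroup has a genuine maximal index $m_H$ and the data structure, and the placement of $H$ at $(s,1)$, are well defined) and Lemma \ref{lem:max-index} (the closure of a prefilter with maximal indices again has maximal indices, which is what lets the duplicate-merging step assign correct indices to the newly generated subgroups).
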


The result is smaller homogeneous components, faster automorphism computations, and an easier explanation of structure. 
Indeed, in \cite{M:classical}, it was shown that even well-studied unipotent classical groups admit surprises such as characteristic filters whose factors are at most of order $p^2$.
Together with \cite{ELGO:Auts}, this then reduces automorphism questions to $\GL(2,p)$ instead of $\GL(d,p)$.
This and further uses in \cite{M:SmallAuts,W:filters} make it desirable to compute with filters efficiently.

In addition to providing a computational framework for filters in Section \ref{algorithms}, we refine several filters for common examples of $p$-groups in Section \ref{examples}.
We look to large examples in the literature and we also consider a sample of 2,000 sections (i.e. quotients of subgroups) of the Sylow 3-subgroups of classical groups of Lie type.
We find that the larger the group, the more new structure we find, and because of the repetitive nature, often one discovery leads to more discoveries.
All of our computations were run in {\sc Magma} V.21-5 \cite{Magma} on a computer with Intel Xeon W3565 microprocessors at 3.20 GHz.

\section{Preliminaries}

We denote the set of nonnegative integers by $\mathbb{N}$, and the set of all subsets of a set $G$ by $2^G$. For groups and rings, we follow notation found in \cite{G:GT}. For $g,h\in G$, we set 
\[ [g,h]=g^{-1}g^{h}=g^{-1}h^{-1}gh;\] 
for $X,Y\subseteq G$, we set 
\[ [X,Y]=\langle [x,y] : x\in X, y\in Y\rangle.\]
We let $\mathbb{Z}_p$ denote the group $\mathbb{Z}/p\mathbb{Z}$.

For a $p$-group $G$, we consider two recursively defined series: the lower central series and the exponent-$p$ central series. 
The lower central series starts with $\gamma_1(G)=G$ and $\gamma_{i+1}(G)=[\gamma_i(G),G]$, and the exponent-$p$ central series
begins with $\eta_1(G)=G$ and $\eta_{i+1}(G) = [\eta_i(G),G]\eta_i(G)^p$. The class ($p$-class) of $G$ is the number of nontrivial terms in the lower central series (exponent-$p$ central series).

\subsection{Complexity}

An algorithm runs in \emph{polynomial time} if the number of operations it uses is bounded by a polynomial of the input length.
At least one  mark of efficiency is polynomial time, but we include run times from experiments as well.

We assume the standard models of computation for groups: permutation and matrix groups and groups given by finite presentations, see \cite{HEO:Handbook}.
All of our methods use at most a polynomial number of operations in the input size, which can be as small as $\log|G|$.
They further depend on efficient methods to compute the order of subgroups and normal closures. 
This is in polynomial time for groups represented as permutations or matrices \cite{S:Perms,L:solvable-matrix}.
For groups given by power-conjugate presentations, there are highly practical algorithms for these tasks, although some problems are not known to be in polynomial time, see \cite{LGS:left,LGS:deep-thought}.
We remark that all of our tests used power-conjugate presentations.

\subsection{Filters and prefilters}

The formulas found in \cite{W:Char} are for commutative monoids, but for computational feasibility, we concentrate only on $\mathbb{N}^d$ with the lexicographical order.
We note that every filter $\phi:\mathbb{N}^d\rightarrow 2^G$ induces a \emph{boundary filter} $\partial\phi:\mathbb{N}^d\rightarrow 2^G$, where $\partial\phi_s = \langle \phi_{s+t} \mid 0\prec t\rangle$.
These are the factor groups of the homogeneous components of the Lie ring in equation (\ref{eqn:Lie-ring}).

\begin{definition}
A function $\pi : X\rightarrow 2^G$ is a \emph{prefilter} if it satisfies the following conditions.
\begin{enumerate}
\item There exists $d$ such that $0\in X\subseteq \mathbb{N}^d$ and $\langle X \rangle = \mathbb{N}^d$;
\item if $x\in X$ and $y\in \mathbb{N}^d$ with $y\preceq x$, then $y\in X$;
\item for all $x\in X$, $\pi_x\trianglelefteq G$;
\item if $x,y\in X$ with $x\preceq y$ then $\pi_x\geq \pi_y$.
\end{enumerate}
\end{definition}

For $s\in \langle X\rangle$, a \emph{partition} of $s$ with respect to $X$ is a sequence $(s_1,...,s_k)$ where each $s_i\in X$ and $s = \sum_{i=1}^ks_i$. 
Let $\mathcal{P}_X(s)$ denote the set of partitions of $s\in \langle X\rangle$ with respect to $X$, and if $P=(s_1,...,s_k)\in\mathcal{P}_X(s)$, then set
\[ [\pi_P ] = [ \pi_{s_1}, ... ,\pi_{s_k} ].\] 
For a function $\pi : X\rightarrow 2^G$, define a new function $\overline{\pi}:\langle X\rangle \rightarrow 2^G$ where 
\begin{equation}\label{eqn:closure} 
\overline{\pi}_s = \prod_{P\in\mathcal{P}_X(s)} [\pi_P].
\end{equation}
\begin{thm}[{\cite[Theorem 3.3]{W:Char}}]
If $\pi$ is a prefilter, then $\overline{\pi}$ is a filter.
\end{thm}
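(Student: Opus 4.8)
The plan is to verify the three defining conditions of a filter for $\overline{\pi}$ on $\langle X\rangle=\mathbb{N}^d$: that each $\overline{\pi}_s$ is a normal subgroup of $G$; that $[\overline{\pi}_s,\overline{\pi}_t]\le\overline{\pi}_{s+t}$; and that $s\preceq t$ implies $\overline{\pi}_t\le\overline{\pi}_s$. It is convenient to record first a few closure properties of a left-normed commutator $[N_1,\dots,N_k]=[[\cdots[N_1,N_2],\dots],N_k]$ of normal subgroups of $G$: it is again normal in $G$; it lies in $\bigcap_i N_i$; it can only grow when some $N_i$ is enlarged; it can only shrink when an extra factor (any subgroup of $G$, in particular $\pi_0$) is spliced anywhere into the list, so that $\pi_0$-factors may be deleted and suffixes truncated freely; and the subgroup form of the three-subgroups lemma, $[A,B,C]\le[B,C,A]\,[C,A,B]$ whenever $A,B,C\trianglelefteq G$, where $[A,B,C]:=[[A,B],C]$.

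Granting these, normality is quick: for $s\ne 0$, any $[\pi_P]$ with a zero part lies in $[\pi_{P_0}]$ where $P_0$ deletes the zero parts, and $s$ has only finitely many zero-free partitions, so $\overline{\pi}_s$ is a finite product of normal subgroups (and $\overline{\pi}_0=\pi_0$). For the commutator condition I would expand $[\overline{\pi}_s,\overline{\pi}_t]$ as the product of the $[[\pi_P],[\pi_Q]]$ over $P\in\mathcal{P}_X(s)$ and $Q\in\mathcal{P}_X(t)$, using that the commutator of two products of normal subgroups is the product of the pairwise commutators. Then, by induction on the length of the \emph{second} iterated commutator and repeated use of the three-subgroups lemma (the familiar pattern behind $[\gamma_i(G),\gamma_j(G)]\le\gamma_{i+j}(G)$), I would show that $[[\pi_{a_1},\dots,\pi_{a_k}],[\pi_{b_1},\dots,\pi_{b_l}]]$ is contained in the product of \emph{all} left-normed commutators $[\pi_{c_1},\dots,\pi_{c_{k+l}}]$, where $(c_1,\dots,c_{k+l})$ runs over the orderings of the multiset $\{a_1,\dots,a_k,b_1,\dots,b_l\}$. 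Each such ordering is a partition of $s+t$ with respect to $X$, so each of these commutators lies in $\overline{\pi}_{s+t}$, and the condition follows.

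The heart of the argument is monotonicity. Fix $Q=(u_1,\dots,u_m)\in\mathcal{P}_X(t)$ with $s\preceq t$; I must show $[\pi_Q]\le\overline{\pi}_s$. The case $s=t$ is immediate, and $s=0$ follows from $[\pi_Q]\le\pi_{u_i}\le\pi_0=\overline{\pi}_0$. If some part satisfies $u_i\succeq s$, then $s\preceq u_i\in X$, so $s\in X$ by downward closure (condition (2)), whence $\pi_s=[\pi_{(s)}]\le\overline{\pi}_s$ and $[\pi_Q]\le\pi_{u_i}\le\pi_s\le\overline{\pi}_s$. The remaining case is that every $u_i\prec s$ while $\sum_i u_i=t\succ s$; here I would construct a partition $(v_1,\dots,v_m)$ of $s$ with $v_i\preceq u_i$ for every $i$, which suffices since then each $v_i\in X$ by downward closure, $(v_1,\dots,v_m)\in\mathcal{P}_X(s)$, and $[\pi_Q]=[\pi_{u_1},\dots,\pi_{u_m}]\le[\pi_{v_1},\dots,\pi_{v_m}]\le\overline{\pi}_s$. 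To build the $v_i$, let $c$ be the first coordinate at which $s$ and $t$ differ; set the first $c-1$ coordinates of each $v_i$ equal to those of $u_i$; choose the $c$-th coordinates so that $v_i^{(c)}\le u_i^{(c)}$ and $\sum_i v_i^{(c)}=s^{(c)}$ (possible since $\sum_i u_i^{(c)}=t^{(c)}>s^{(c)}$, and forcing $v_{i_0}^{(c)}<u_{i_0}^{(c)}$ for at least one $i_0$); finally put every coordinate of $v_i$ beyond $c$ equal to $0$ for $i\ne i_0$, and $v_{i_0}^{(j)}=s^{(j)}$ for $j>c$. A direct check gives $\sum_i v_i=s$, and $v_i\preceq u_i$ for each $i$, since either $v_i$ drops strictly at coordinate $c$ or it agrees with $u_i$ through coordinate $c$ and then has trailing zeros.

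I expect this last construction to be the main obstacle: one must exploit that $t\succ s$ lexicographically while every part is $\prec s$ to see that the parts of $Q$ can be redistributed downward to an \emph{exact} partition of $s$ without ever violating $v_i\preceq u_i$, and this is precisely where the lexicographic (rather than componentwise) order is essential, since $t\succeq s$ does not imply $t\ge s$ coordinatewise. The three-subgroups induction behind the commutator condition is standard but also needs care: a naive ``shuffle'' version of the containment is too weak for the induction to close, and one really needs the full set of orderings of the multiset of parts.
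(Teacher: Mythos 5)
The paper does not prove this statement; it is quoted verbatim from Wilson \cite[Theorem 3.3]{W:Char}, so there is no internal proof to compare yours against. Judged on its own, your argument is correct and complete for the setting the paper actually uses ($X\subseteq\mathbb{N}^d$ with the lexicographic order). The three points where the proof could genuinely fail are all handled: (i) well-definedness/normality of $\overline{\pi}_s$, where you correctly reduce the a priori infinite product over $\mathcal{P}_X(s)$ to the finitely many zero-free partitions by absorbing $\pi_0$-entries; (ii) the commutator condition, where your induction on the length of the second factor via the Hall--Witt/three-subgroups containment $[[B,C],A]\le[[C,A],B]\,[[A,B],C]$ closes precisely because you carry the full set of orderings of the combined multiset (and each ordering is again an element of $\mathcal{P}_X(s+t)$, since its entries lie in $X$ and sum to $s+t$); and (iii) monotonicity, which is the only place the lexicographic order enters and where your coordinate-by-coordinate redistribution of a partition of $t$ down to a partition of $s$ with $v_i\preceq u_i$ is the right device --- note that your construction in fact works for every $s\prec t$, so your two intermediate cases (``some $u_i\succeq s$'' and ``$s=0$'') are subsumed and could be deleted. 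One cosmetic caveat: your argument leans on the hypothesis that $\preceq$ is the lexicographic total order, so it proves the theorem as it is used in this paper rather than Wilson's original statement for arbitrary commutative pre-ordered monoids, where the order-reversal step is handled differently.
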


Observe that $|\mathcal{P}_X(s)|$ is exponential in $\log|G|$, so the equation for $\overline{\pi}$ in (\ref{eqn:closure}) is not a practical nor a polynomial-time formula.

\section{Algorithms}\label{algorithms}

We have two basic problems for filters.
We remind the reader that all of our filters are totally ordered, and we assume that $\mathbb{N}^d$ is ordered by the lexicographical ordering.

\begin{prob*}{\sc Evaluate}
\begin{description}
\item[Input:] A filter $\phi : \mathbb{N}^d\rightarrow 2^G$ and $s\in\mathbb{N}^d$;
\item[Return:] Generators for the subgroup $\phi_s$.
\end{description}
\end{prob*}

\begin{prob*}{\sc Boundary}
\begin{description}
\item[Input:] A filter $\phi : \mathbb{N}^d\rightarrow 2^G$;
\item[Return:] A filter $\partial\phi : \mathbb{N}^d\rightarrow 2^G$.
\end{description}
\end{prob*}

After we determine polynomial-time algorithms for these problems, we discuss an algorithm for the following problem and prove it can be solved in polynomial time.

\begin{prob*}{\sc Generate}
\begin{description}
\item[Input:] A prefilter $\pi : \mathbb{N}^d\rightarrow 2^G$;
\item[Return:] Its closure $\overline{\pi} : \mathbb{N}^d\rightarrow 2^G$.
\end{description}
\end{prob*}

\subsection{A data structure of filters}

We introduce a data structure to compute with filters which admits polynomial-time algorithms for the three problems listed above.
To do this, we assume the following properties of our filters.
\begin{definition}
A filter $\phi:\mathbb{N}^d\rightarrow 2^G$ is \emph{full} if for all $1\ne H\in\im(\phi)$, there exists $s\in \mathbb{N}^d$ such that $\phi_s=H$ and $\phi_s\ne\partial\phi_s$.
\end{definition} 
Said another way, we assume that $I_H=\{ s\in \mathbb{N}^d : \phi_s=H\}$ has a maximal element.
Since we assume a total ordering, $I_H$ has a unique maximal element, and we denote it by $m_H$.
Observe that a filter $\phi:\mathbb{N}^d\rightarrow 2^G$ for a finite $p$-group $G$ is full if, and only if, $|\partial\phi_0|=|L(\phi)|$.
This might seem like a restrictive assumption, but it is relatively harmless.
We will prove that for every filter, we can construct a full filter without changing the image in Section \ref{full-filters}.

The data structure of a filter $\phi:\mathbb{N}^d\rightarrow 2^G$ is a signed set that stores the pairs $(m,H)\in \mathbb{N}^d\times 2^G$ for each $1\ne H\in\im(\phi)$.
If for every $1\ne H\in\im(\phi)$, the set $I_H$ has a maximum, then sign$(\phi)=1$ and the stored index for $H$ will be its maximum index.
Otherwise, sign($\phi)=-1$, and the stored indices are minimal indices.
Therefore, for computation, a filter $\phi:\mathbb{N}^d\rightarrow 2^G$ is regarded as
\[ \phi = (\text{sign}(\phi), \{ (m,H) : 1\ne H\in \im(\phi) \} ).\]

Since all of our filters are series, if $1\ne H\in\im(\phi)$, define $H^+$ to be the next term in the descending series.
Therefore, $H>H^+$.
We first prove a lemma that characterizes when $I_H$ has a maximal element.

\begin{lem}\label{lem:max}
Suppose $\phi : \mathbb{N}^d\rightarrow 2^G$ is a filter and $1\ne H\in\im(\phi)$. Let $m=\min(I_{H^+})$. The set $I_H$ has a maximum element if, and only if, the $d$th entry of $m$ is nonzero.
\end{lem}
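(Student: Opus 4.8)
The plan is to reduce the statement to one elementary feature of the lexicographical order on $\mathbb{N}^d$. Writing $e_d=(0,\dots,0,1)$, a point $m\in\mathbb{N}^d$ has an immediate $\prec$-predecessor if and only if its $d$th entry $m_d$ is nonzero, in which case that predecessor is $m-e_d$: any $s$ with $m-e_d\prec s\prec m$ would have to agree with $m$ in its first $d-1$ entries and satisfy $m_d-1<s_d<m_d$, which no integer does, while for $m_d=0$ there is no candidate at all. So I would first reduce the lemma to showing that $I_H$ has a maximum exactly when $m=\min(I_{H^+})$ has an immediate predecessor, and that this predecessor, when it exists, lies in $I_H$.

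Before the two directions I would record two consequences of $\phi$ being an order-reversing series. (i) Every element of $I_H$ is $\prec m$: if $s\in I_H$ and $t\in I_{H^+}$ then $\phi_s=H>H^+=\phi_t$, so $t\preceq s$ is impossible, and since $\preceq$ is total, $s\prec t$; take $t=m$. (ii) If $M\prec m$ then $\phi_M\geq H$: order-reversal gives $\phi_M\geq\phi_m=H^+$, and $\phi_M=H^+$ is impossible since then $M\in I_{H^+}$ and so $M\succeq m$; as $\im(\phi)$ is totally ordered and $H$ is the predecessor of $H^+$ in the series, $\phi_M\geq H$ follows. I also note that $m\neq 0$, since $m=0$ would give $\phi_0=\phi_m=H^+$, yet $\phi_0$ is the largest term of the series while $H^+<H\in\im(\phi)$.

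For the forward direction, let $M=\max(I_H)$, so $M\prec m$ by (i). From $M\preceq M+e_d$ we get $\phi_{M+e_d}\leq\phi_M=H$, and maximality of $M$ forces $\phi_{M+e_d}\neq H$, hence $\phi_{M+e_d}\leq H^+$; then $M+e_d\prec m$ is impossible (it would give $\phi_{M+e_d}\geq\phi_m=H^+$, whence $\phi_{M+e_d}=H^+$ and $M+e_d\in I_{H^+}$, forcing $M+e_d\succeq m$), so $M+e_d\succeq m$. Thus $M\prec m\preceq M+e_d$, and since nothing lies strictly between $M$ and $M+e_d$ this forces $m=M+e_d$, so $m_d=M_d+1\neq 0$.

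For the converse, suppose $m_d\neq 0$ and set $M=m-e_d\in\mathbb{N}^d$. By (ii) we have $\phi_M\geq H$, and I claim $\phi_M=H$. If not then $\phi_M>H$; but $H\in\im(\phi)$, so $\phi_t=H$ for some $t\in I_H$, whence $t\prec m$ by (i). Comparing $t$ with $M$: if $t\preceq M$ then $H=\phi_t\geq\phi_M>H$, absurd, and $M\prec t\prec m$ is impossible since $M$ and $m$ are consecutive. Hence $M\in I_H$, and it is the maximum, because any $s\in I_H$ with $s\succ M$ would satisfy $M\prec s\prec m$. I expect this last step — pinning $\phi_M$ down to $H$ exactly, rather than merely bounding it from below — to be the fussiest point; it is handled by the absence of points strictly between $M$ and $m$ together with the total order on $\im(\phi)$. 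The degenerate cases are benign: $I_{H^+}\neq\emptyset$ because $H^+$ is a term of the series, and $m\neq 0$ was noted above.
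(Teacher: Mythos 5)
Your proof is correct and follows essentially the same route as the paper: both hinge on the observation that $m-e_d$ is the immediate lexicographic predecessor of $m$ exactly when $m_d\neq 0$, together with the fact that $I_H$ is the interval of indices strictly below $\min(I_{H^+})$ and above $I_X$ for $X>H$. The only cosmetic differences are that the paper argues the forward direction by contraposition (if $m_d=0$, then $s+e_d\in I_H$ for every $s\in I_H$, so no maximum) where you argue it directly, and that you spell out the verification that $\phi_{m-e_d}=H$, which the paper asserts without detail.
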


\begin{proof}
Since $\phi$ is a filter it follows that for all $s\in I_H$, $s\prec m$.
Suppose the $d$th entry of $m$ is 0. Choose $s\in I_H$; then each term in the increasing sequence 
\[ s\prec s + e_d \prec s+2e_d \prec \cdots \]
is strictly less than $m$. Therefore, $I_H$ cannot have a maximum. On the other hand, suppose the $d$th entry is nonzero. Then $m-e_d\in\mathbb{N}^d$. Since $m-e_d\prec m=\min(I_{H^+})$, it follows that $m-e_d\in I_H$, and thus, $\max(I_H)=m-e_d$. 
\end{proof}

\begin{prop}
{\sc Evaluate} and {\sc Boundary} are in polynomial time.
\end{prop}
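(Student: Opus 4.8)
The plan is to exploit the data structure: a full filter $\phi$ is stored as a signed set of pairs $(m_H, H)$ for the finitely many distinct nontrivial subgroups $H$ in $\im(\phi)$, and since $\phi$ is a descending series of normal subgroups of a finite $p$-group $G$, the number of such pairs is at most $\log_p|G|$, hence polynomial in the input length. So it suffices to show that, given the stored data, each of the two problems reduces to a polynomial number of group-order and normal-closure computations, which are available in polynomial time in our models of computation.

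For \textsc{Evaluate}, given a query index $s\in\mathbb{N}^d$, I would first observe that $\phi_s$ is the largest $H\in\im(\phi)$ whose \emph{critical index} is $\succeq s$; concretely, if $\mathrm{sign}(\phi)=1$ we have stored $m_H=\max(I_H)$ for each $H$, and $\phi_s = \bigcap\{H : s\preceq m_H\}$, which because the $H$ form a chain is just the unique smallest such $H$ (or, if no stored index dominates $s$, the trivial group). Thus \textsc{Evaluate} amounts to scanning the (polynomially many) stored pairs, doing lexicographic comparisons of $d$-tuples against $s$, and returning the generators already recorded for the matching $H$; no new group computation is needed at all. If instead $\mathrm{sign}(\phi)=-1$ the stored indices are the minima $\min(I_H)$, and one argues symmetrically: $\phi_s$ is the largest $H$ with $\min(I_H)\preceq s$, again found by a linear scan. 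Either way this is clearly polynomial time.

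For \textsc{Boundary}, I need to produce the data structure for $\partial\phi$, i.e.\ for each nontrivial $H\in\im(\phi)$ I must compute $\partial\phi_{m_H} = \langle \phi_{m_H+t} \mid 0\prec t\rangle$ and then record the resulting subgroups together with their (maximal or minimal) indices. The key point is that this normal closure need not range over infinitely many $t$: because the image of $\phi$ is the finite chain of stored subgroups, $\langle \phi_{m_H+t} \mid 0\prec t\rangle$ equals the join of finitely many members of that chain, and since the chain is totally ordered this join is simply the single term $\phi_{s^*}$ where $s^*$ is the lexicographically smallest index exceeding $m_H$ that attains a strictly smaller subgroup --- and that term is exactly $H^+$, the successor of $H$ in the series, \emph{unless} incrementing the last coordinate already drops below $H$, which Lemma~\ref{lem:max} controls. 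So in fact $\partial\phi_{m_H}$ is read off from the stored data using Lemma~\ref{lem:max}: when the $d$th entry of $\min(I_{H^+})$ is nonzero we get $\partial\phi_{m_H}=H$ itself (the index $m_H=m-e_d$ is not boundary-critical), otherwise $\partial\phi_{m_H}=H^+$. Assembling the signed set for $\partial\phi$ from these finitely many cases, and recomputing the maximal indices via Lemma~\ref{lem:max}, is again a polynomial amount of work, with each individual subgroup already available from $\phi$'s data structure so that order comparisons (to detect equalities and collapse the chain) use only polynomially many order computations.

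The main obstacle I anticipate is not the scanning or bookkeeping but justifying rigorously that the infinite-looking generating set $\langle \phi_{m+t} \mid 0\prec t\rangle$ collapses to a single stored term --- one must handle the lexicographic order carefully (a strictly smaller subgroup can first appear only after increasing some coordinate $e_i$ with $i<d$, or the last coordinate, and one has to rule out the pathological case where no finite truncation of the sequence $m+e_d, m+2e_d,\dots$ suffices), and this is precisely where fullness of $\phi$ and Lemma~\ref{lem:max} are used. Once that reduction is in hand, both algorithms are manifestly polynomial time.
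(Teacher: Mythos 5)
Your treatment of {\sc Evaluate} is essentially the paper's: locate the interval $I_H$ containing $s$ among the $O(\log|G|)$ stored pairs by lexicographic comparison (the paper uses a binary search, you use a linear scan; both are polynomial and need no group computation). One slip there: $\phi_s$ is not $\bigcap\{H : s\preceq m_H\}$ --- the intersection of that descending chain is its deepest term --- but the \emph{largest} such $H$, equivalently the one whose stored index is lexicographically least among those $\succeq s$; your opening phrase states this correctly, so I read it as a notational error only.

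The genuine problem is in {\sc Boundary}: your final case analysis is inverted. Since $e_d$ is the lexicographically least nonzero element of $\mathbb{N}^d$ and $\phi$ is order-reversing, $\langle\phi_{s+t}\mid 0\prec t\rangle=\phi_{s+e_d}$ for every $s$; this is the collapse you correctly anticipate. But then, when the $d$th entry of $m=\min(I_{H^+})$ is nonzero, Lemma~\ref{lem:max} gives $m_H=m-e_d$, so $\partial\phi_{m_H}=\phi_{m_H+e_d}=\phi_{m}=H^+$, not $H$; and when that entry is zero, $s+e_d$ remains in $I_H$ for every $s\in I_H$, so $\partial\phi_s=H$, not $H^+$ (moreover $m_H$ does not exist in this case, so the expression $\partial\phi_{m_H}$ is not even defined). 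As written, your rule would give $\partial\phi_{m_H}=H=\phi_{m_H}$ for every full filter, contradicting the defining condition $\phi_{m_H}\ne\partial\phi_{m_H}$ of fullness, and would return $\partial\phi=\phi$ on exactly the filters the algorithm targets. With the two cases swapped back you recover the paper's construction: for $\sign(\phi)=1$ one records $(0,G)$ together with the pairs $(m_H,H^+)$ for $1\ne H\in\im(\phi)$, with $\sign(\partial\phi)=-1$, and the whole step is a constant-time relabelling of stored data needing no order or normal-closure computations at all.
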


\begin{proof}
We first solve {\sc Evaluate}. 
Because $\preceq$ is a total order, $\mathbb{N}^d$ is partitioned into intervals $I_H$ and $\phi$ stores $O(\log|G|)$ pairs $(m,H)$.
Thus, given $s\in \mathbb{N}^d$, find the interval $I_H$ such that $s\in I_H$. 
That is, find consecutive terms in $\phi$, $(m,H)$ and $(n,K)$, such that $m \prec s \prec n$.
Therefore, {\sc Evalute} requires $O(\log\log|G|)$ applications of $\preceq$ in $\mathbb{N}^d$.

Because of Lemma \ref{lem:max}, {\sc Boundary} is solved in constant time by the following constructions.
If $\text{sign}(\phi)=1$, then $\partial\phi = (-1, \{ (0,G)\} \cup \{ (m_H,H^+) : 1\ne H\in\im(\phi)\} )$.
If $\text{sign}(\phi)=-1$, then define $n_G=0$ and, for $1\ne H\in\im(\phi)$,
\[ n_{H^+} = \left\{ \begin{array}{ll} m_H & \text{if } m_H \text{ exists}, \\ \min(I_{H^+}) & \text{otherwise} .\end{array}\right.\]
Therefore, $\partial\phi = (-1,\{ (n_H,H) : H \in \im(\phi) \} )$.
\end{proof}

\subsection{Full filters}\label{full-filters}

We assume $\phi:\mathbb{N}^d\rightarrow 2^G$ is a filter that is not full.
We show that we can always fill our filters to make them full.

\begin{lem}\label{lem:fill}
Suppose $\phi:\mathbb{N}^d\rightarrow 2^G$ is a filter that is not full. 
If $H\ne 1$ is the largest subgroup in $\im(\phi)$ such that $I_H$ does not have a maximal element, then there exists a filter $\gamma:\mathbb{N}^d\rightarrow 2^G$ with the following properties: 
\begin{enumerate}
\item $\im(\gamma)=\im(\phi)$,
\item if $K\in\im(\phi)$ such that $I_K$ has a maximum, then $\{ s\in\mathbb{N}^d\mid \gamma_s=K\}$ has a maximum, and 
\item $\{ s\in\mathbb{N}^d \mid \gamma_s=H\}$ has a maximum.
\end{enumerate}
\end{lem}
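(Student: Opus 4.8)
The plan is to construct $\gamma$ from $\phi$ by a single ``shift'' in the last coordinate, tailored to the distinguished subgroup $H$. By hypothesis $I_H$ has no maximum, so by Lemma \ref{lem:max} the minimal index $m=\min(I_{H^+})$ has $d$th entry equal to zero. The idea is to keep $\gamma_s=\phi_s$ for all $s$ that are ``small'' relative to $m$, and to re-index the values on the ray above $m$ so that $H$ acquires a maximal index. Concretely, I would pick the largest index $m'$ with $m' \prec m$ at which $\phi$ changes value below $H$ — equivalently, let $m' = \min(I_H)$, which exists because $\mathbb{N}^d$ is well-ordered under $\preceq$ — and define $\gamma_s = \phi_s$ for $s \preceq m'$, then collapse the interval $I_H$ onto a single new index. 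Since the $d$th coordinate of $m$ is $0$, there is ``room'' directly below $m$: the index $m - e_d$ is not in $\mathbb{N}^d$, but any index of the form $m' + k e_d$ lies strictly between $m'$ and $m$, giving us infinitely many slots in $I_H$ to redistribute onto a finite descending chain. So the real construction is to enumerate the (finitely many) subgroups $H = H_0 > H_1 > \cdots > H_r$ lying strictly between $H^+$ and $H$ in the part of $\im(\phi)$ indexed inside $I_H$ — wait, these are exactly the terms of the series between $\partial\phi$-value and $H$ — and assign them the indices $m', m'+e_d, m'+2e_d,\dots$, then set $\gamma_s = \phi_m = H^+$ for $s \succeq m$. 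Because these indices still form an increasing chain below $m$, property (2) is automatically preserved for every $K$ with $\max(I_K)$ already existing (their indices are untouched, being $\prec m'$), and property (3) holds because $H$ now sits at the single index $m'$ — or more carefully, at the largest index in its new block.

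The key steps, in order, would be: (i) invoke Lemma \ref{lem:max} to get that the $d$th entry of $m = \min(I_{H^+})$ is zero; (ii) let $m' = \min(I_H)$ and observe $m' + ke_d \prec m$ for all $k \geq 0$; (iii) define $\gamma_s = \phi_s$ whenever $s \prec m'$, and for $s \succeq m'$ define $\gamma_s$ by the re-indexing above, mapping the chain of subgroups that $\phi$ realizes on $I_H$ onto the discrete ray $m' + \mathbb{N}e_d$, and setting $\gamma_s = H^+ = \phi_m$ once $s \succeq m$; (iv) verify $\gamma$ is a filter, i.e. check $[\gamma_s,\gamma_t] \leq \gamma_{s+t}$ and monotonicity — monotonicity is immediate from the construction since we only ever shrink values as indices increase, and the commutator condition follows from that of $\phi$ together with the fact that each $\gamma_s$ equals some $\phi_u$ with $u \preceq s$, so $[\gamma_s,\gamma_t] = [\phi_u,\phi_v] \leq \phi_{u+v} \leq \phi_{\text{something} \preceq s+t}$; (v) read off properties (1)–(3): the image is unchanged since we only permuted which indices hit which subgroups, the untouched part ($s \prec m'$) preserves existing maxima, and $H$ now has maximum index equal to the top of its finite new block.

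The main obstacle I anticipate is step (iv), the filter axiom check — specifically making sure the commutator inequality survives the re-indexing. The subtle point is that after shifting, an index $s$ with $\gamma_s = \phi_u$ may have $u \succ s$ in the lexicographic order (we moved a subgroup ``down'' to a smaller index), so the naive bound $[\gamma_s,\gamma_t] = [\phi_u,\phi_v] \leq \phi_{u+v}$ needs $u + v$ to be comparable with $s+t$ in the right direction, which is not automatic. The cleanest fix is probably to argue directly on subgroups rather than indices: list the finitely many subgroups in the relevant window as a descending chain and observe that for a $p$-group (indeed any group) a descending normal series refining the terms of a filter, reassigned to any increasing sequence of indices that respects the original comparisons, still satisfies $[\cdot,\cdot]$-subadditivity because $[\phi_u,\phi_v]$ is contained in the term of the series that $\phi$ assigns to $u+v$, and re-indexing only makes that term appear at a \emph{later} (larger) index in $\gamma$, not an earlier one. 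Formalizing ``later, not earlier'' is where one must be careful about the boundary case $s + t \succeq m$, but there $\gamma_{s+t} = H^+$ and everything above is contained in $H^+$ anyway, so it goes through.
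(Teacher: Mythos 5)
Your overall architecture matches the paper's: leave $\phi$ unchanged outside $I_H$, cut $I_H$ at some point $e$, and reassign the tail $\{s\in I_H \mid s\succ e\}$ to $H^+$. But the one substantive choice --- where to put the cut --- is exactly where your proposal fails. You collapse $I_H$ onto the single index $m'=\min(I_H)$ (your ``finite descending chain'' of subgroups strictly between $H$ and $H^+$ is empty, since $H^+$ is by definition the next term of the series, so your construction reduces to taking $e=m'$). This breaks the filter axiom. Concretely: let $G$ have nilpotency class exactly $2$ and define $\phi:\mathbb{N}^2\rightarrow 2^G$ (lex order) by $\phi_s=G$ for $s\preceq(1,0)$, $\phi_s=G'$ for $(1,1)\preceq s\prec(3,0)$, and $\phi_s=1$ for $s\succeq(3,0)$. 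This is a filter, $I_G$ has maximum $(1,0)$, and $H=G'$ is the largest term with no maximal index. Your $\gamma$ sets $\gamma_{(2,0)}=H^+=1$, yet $[\gamma_{(1,0)},\gamma_{(1,0)}]=[G,G]=G'\neq 1$, so $\gamma$ is not a filter. The error is visible in your step (iv): you assert that re-indexing makes each term appear at a \emph{later} index in $\gamma$, but the reassignment moves $H$ to an \emph{earlier} (smaller) block, i.e.\ $\gamma_{s+t}=H^+<H=\phi_{s+t}$ on the discarded tail, so the bound $[\gamma_s,\gamma_t]\leq\phi_{s+t}$ proves nothing there; and the only boundary case you examine, $s+t\succeq m$, is the harmless one --- the danger zone is $\min(I_H)\prec s+t\prec m$.

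The missing idea is the paper's choice of cut point, $e=\max(\{ m_X+m_Y \mid X,Y\in\im(\phi),\ X>H,\ Y>H\}\cap I_H)$, falling back to $e=\min(I_H)$ only when that intersection is empty. (This is well defined precisely because $H$ is the \emph{largest} term lacking a maximal index, so every $X>H$ has $m_X$.) With this $e$, if $s+t$ lies in the discarded tail of $I_H$ and $s\in I_X$, $t\in I_Y$ with $X,Y>H$, then $s\preceq m_X$ and $t\preceq m_Y$ give $s+t\preceq m_X+m_Y$; now $m_X+m_Y$ either lies in $I_H$, forcing $s+t\preceq e$ (a contradiction), or lies beyond $I_H$, whence $[\phi_s,\phi_t]\leq\phi_{m_X+m_Y}\leq H^+$ as required. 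In the example above this yields $e=(2,0)$ rather than $(1,1)$. The rest of your plan --- using Lemma \ref{lem:max} to locate the defect, and preserving existing maxima because only $I_H$ and $I_{H^+}$ are modified and $I_{H^+}$ only grows downward --- is consistent with the paper.
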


\begin{proof}
Define 
\begin{equation}\label{eqn:effective-max} 
e = \max(\{ m_X + m_Y \mid X,Y\in\im(\phi), X>H, Y>H \}\cap I_H).
\end{equation}
In case the intersection in (\ref{eqn:effective-max}) is trivial, set $e=\min(I_H)$. 
Otherwise it is finite and must contain a maximum.
Suppose $s,t\in \mathbb{N}^d$ such that $s+t=e$. 
Suppose $s\in I_X$, $t\in I_Y$ and if $s,t$ are not maximal, then this would contradict (\ref{eqn:effective-max}).
Since both $s$ and $t$ are maximal, it follows that 
\[ [\partial\phi_s,\phi_t] < [\phi_s,\phi_t] \leq \phi_e = H.\]
Hence, for all $s,t\in \mathbb{N}^d$, where $s+t=e$, it follows that $[\partial\phi_s,\phi_t]\leq H^+$.

For each $X\in\im(\phi)-\{H,H^+\}$ define $J_X=I_X$. Additionally, set
\[ J_H =\{ s\in I_H \mid s\preceq e\} \quad\text{and}\quad J_{H^+} =\{ s \in I_H \mid e \prec s \} \cup I_{H^+}.\]
For each $s\in \mathbb{N}^d$, there exists a unique $X\in\im(\phi)$ such that $s\in J_X$, so define $\gamma_s=X$.
Since $\phi$ is a filter, it follows that $\gamma_s\trianglelefteq G$ for all $s\in\mathbb{N}^d$. 
Moreover, $\gamma$ is order reversing.

Let $s,t\in\mathbb{N}^d$. 
There are a few cases depending on whether $s$, $t$, or $s+t$ are contained in $\{ u\in I_H \mid e\prec u\}$.
If only $s+t\in \{ u\in I_H \mid e\prec u\}$, then, using (\ref{eqn:effective-max}), $[\gamma_s,\gamma_t] = [\phi_s,\phi_t]<H$. 
Therefore, $[\gamma_s,\gamma_t]\leq H^+=\gamma_{s+t}$.
All the other cases use similar arguments. Thus, $\gamma$ is a filter, and the lemma follows.
\end{proof}

\begin{thm}\label{thm:full}
Suppose $\phi:\mathbb{N}^d\rightarrow 2^G$ is a filter. There exists polynomial-time algorithms that
\begin{enumerate}
\item decide if $\phi$ is full, and 
\item if $\phi$ is not full, construct a full filter $\gamma:\mathbb{N}^d\rightarrow 2^G$ such that $\im(\gamma)=\im(\phi)$.
\end{enumerate}
\end{thm}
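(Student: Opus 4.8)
The plan is to reduce both claims to Lemmas~\ref{lem:max} and~\ref{lem:fill}. For part~(1), recall that $\phi$ is full precisely when $I_H$ has a maximum for every $1\ne H\in\im(\phi)$. As $\phi$ is a series in the finite group $G$, the set $\im(\phi)$ has only $O(\log|G|)$ members, and for each of them the index $\min(I_{H^+})$ can be read off the data structure in constant time (it is stored directly when $\mathrm{sign}(\phi)=-1$, and is the lexicographic successor of the stored maximum $m_H$ when $\mathrm{sign}(\phi)=1$). By Lemma~\ref{lem:max}, $I_H$ has a maximum if and only if the $d$th coordinate of $\min(I_{H^+})$ is nonzero, so I would simply loop over the $O(\log|G|)$ subgroups in $\im(\phi)$, perform this one coordinate comparison for each, and report that $\phi$ is full exactly when every comparison succeeds. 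This uses $O(\log|G|)$ operations in $\mathbb{N}^d$.

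For part~(2), suppose $\phi$ is not full and let $H_1>H_2>\cdots>H_k$ enumerate the subgroups of $\im(\phi)$ whose interval lacks a maximum, so $k\geq 1$ and $k=O(\log|G|)$. I would apply Lemma~\ref{lem:fill} with $H=H_1$ to obtain a filter $\gamma^{(1)}$ with $\im(\gamma^{(1)})=\im(\phi)$. Examining that construction, the interval of every subgroup other than $H_1$ and $H_1^+$ is unchanged, the new interval of $H_1$ acquires the maximum $e$ from~(\ref{eqn:effective-max}), and the new interval of $H_1^+$ differs from the old one only by appending elements that lie lexicographically below all of $I_{H_1^+}$, so it has a maximum if and only if $I_{H_1^+}$ did. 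Hence the subgroups of $\im(\gamma^{(1)})$ whose interval lacks a maximum are exactly $\{H_2,\dots,H_k\}$: $H_1$ has been removed and nothing new has appeared. Iterating this --- apply Lemma~\ref{lem:fill} to $\gamma^{(i)}$ with $H=H_{i+1}$ for $i=1,\dots,k-1$ --- after $k$ rounds the filter $\gamma:=\gamma^{(k)}$ has $\im(\gamma)=\im(\phi)$ and no subgroup with a maximum-free interval; that is, $\gamma$ is full.

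For the running time, note that a single application of Lemma~\ref{lem:fill} only forms the $O(\log^2|G|)$ sums $m_X+m_Y$ occurring in~(\ref{eqn:effective-max}), intersects that finite set with $I_H$, takes a maximum, and rebuilds the data structure; each of these is an arithmetic step or a comparison in $\mathbb{N}^d$, and --- crucially --- no normal closures, subgroup orders, or other group-theoretic computations are needed, since all of $\im(\phi)$ is already available. So each round costs $\mathrm{poly}(\log|G|,d)$ operations, and there are only $k=O(\log|G|)$ rounds; the whole procedure is polynomial time.

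I expect the main point requiring care to be the bookkeeping in part~(2): verifying that each call to Lemma~\ref{lem:fill} strictly decreases the number of subgroups with a maximum-free interval while creating no new one --- the delicate case being the behaviour of the interval attached to $H_1^+$, which may coincide with $H_2$. Everything else is routine once Lemmas~\ref{lem:max} and~\ref{lem:fill} are in hand.
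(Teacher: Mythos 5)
Your proposal is correct and follows essentially the same route as the paper: part (1) is immediate from Lemma~\ref{lem:max}, and part (2) iterates Lemma~\ref{lem:fill} until every interval has a maximum, with the same $O(\log^3|G|)$ operation count. You supply more detail than the paper does on why the iteration terminates (each application of Lemma~\ref{lem:fill} removes exactly one maximum-free interval and creates none, the only delicate case being $H^+$), which the paper leaves implicit.
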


\begin{proof}
For (1), this follows from Lemma \ref{lem:max}. 
For (2), iterate Lemma \ref{lem:fill} until every $I_H$ has a maximal element. 
This is done with $O(\log^3|G|)$ operations in $\mathbb{N}^d$ and applications of $\preceq$.
\end{proof}

\subsection{Generating filters from prefilters}

Next, we generate filters from prefilters $\pi:X\rightarrow 2^G$. 
We store prefilters in the same way we store filters, so in terms of data structures, filters and prefilters are indistinguishable. 
We note that we only consider the case where $\sign(\pi)=1$.
Because prefilters are not required to satisfy $[\pi_s,\pi_t]\leq \pi_{s+t}$ one can simply change the function of $\pi$ to allow for maximal indices. 
However, prefilters often come as refinements of filters, so to keep the filter structure intact, one employs Theorem \ref{thm:full}.

We prove a useful lemma first.

\begin{lem}\label{lem:max-index}
Let $\pi:X\rightarrow 2^G$ be a prefilter with $X=\mathbb{N}^d$. If, for every $1\ne H\in\im(\pi)$, the set $I_H$ has a maximum, then the same holds for all $1\ne K\in\im(\overline{\pi})$.
\end{lem}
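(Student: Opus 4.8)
The plan is to reduce the statement, via Lemma~\ref{lem:max} applied to the filter $\overline{\pi}$ (which is a filter by \cite[Theorem~3.3]{W:Char}), to the following claim: for every $L\in\im(\overline{\pi})$ with $L\neq G$, the $d$th coordinate of $\min(I_L^{\overline{\pi}})$ is nonzero, where $I_L^{\overline{\pi}}=\{s\in\mathbb{N}^d:\overline{\pi}_s=L\}$. Granting this, if $1\neq K\in\im(\overline{\pi})$ then $K^+<K\leq G$, so $K^+\neq G$; hence the $d$th coordinate of $\min(I_{K^+}^{\overline{\pi}})$ is nonzero, and Lemma~\ref{lem:max} gives that $I_K^{\overline{\pi}}$ has a maximum.

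The first real step is to replace the exponential formula \eqref{eqn:closure} by a finite one. Put $\mathcal{J}=\{m_H : 1\neq H\in\im(\pi)\}$; this is a finite subset of $\mathbb{N}^d$, and it is well defined \emph{precisely because} of the hypothesis. For a tuple $\vec{m}=(m_1,\dots,m_k)$ with entries in $\mathcal{J}$ write $[\pi_{\vec{m}}]=[\pi_{m_1},\dots,\pi_{m_k}]$ and $\sigma(\vec{m})=\sum_i m_i$. I claim
\[
\overline{\pi}_s=\prod\bigl\{\,[\pi_{\vec{m}}] : \vec{m}\in\textstyle\bigcup_k\mathcal{J}^k,\ \sigma(\vec{m})\succeq s\,\bigr\}.
\]
For $\leq$: a partition $P=(s_1,\dots,s_k)$ of $s$ with some $\pi_{s_i}=1$ contributes the trivial group to \eqref{eqn:closure}; otherwise each $s_i$ lies in the fiber $I_{\pi_{s_i}}^\pi$, so replacing $s_i$ by $m_i:=m_{\pi_{s_i}}\in\mathcal{J}$ fixes $[\pi_P]=[\pi_{\vec{m}}]$, while $s_i\preceq m_i$ gives $s=\sum s_i\preceq\sum m_i=\sigma(\vec{m})$ because the lexicographic order is translation invariant. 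For $\geq$: if $\sigma(\vec{m})\succeq s$ then $\vec{m}$ is itself a partition of $\sigma(\vec{m})$, so $[\pi_{\vec{m}}]\leq\overline{\pi}_{\sigma(\vec{m})}\leq\overline{\pi}_s$ since $\overline{\pi}$ is order reversing. Finally, if $c$ is the class of $G$ then $[\pi_{\vec{m}}]\leq\gamma_k(G)=1$ whenever $k>c$, so the product may be taken over the finite set $\mathcal{M}$ of tuples over $\mathcal{J}$ of length at most $c$. (A zero entry, possible only as $m_G$ when $G\neq 1$, leaves $\sigma$ unchanged and can only enlarge $[\pi_{\vec{m}}]$, so it may be discarded; the case $G=1$ is vacuous.)

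Now suppose for contradiction that some $L\neq G$ in $\im(\overline{\pi})$ has $\mu:=\min(I_L^{\overline{\pi}})$ with vanishing $d$th coordinate; note $\mu\neq 0$ because $\overline{\pi}_0=G\neq L$. Then the initial segment $\{s:s\prec\mu\}$ is nonempty with no maximum, so there is a strictly increasing sequence $\nu_1\prec\nu_2\prec\cdots$ with supremum $\mu$ (explicitly, with $j$ the largest index $<d$ for which $\mu_j>0$, take $\nu_n=(\mu_1,\dots,\mu_{j-1},\mu_j-1,n,\dots,n)$ with $d-j$ trailing $n$'s). The sets $A_n=\{\vec{m}\in\mathcal{M} : \sigma(\vec{m})\succeq\nu_n\}$ form a descending chain of subsets of the finite set $\mathcal{M}$, so they stabilize; and $\bigcap_n A_n=\{\vec{m}\in\mathcal{M}:\sigma(\vec{m})\succeq\mu\}$, the inclusion $\supseteq$ being immediate and $\subseteq$ holding because $\mu$ is the \emph{least} upper bound of the $\nu_n$. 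Hence for large $n$ we get $\overline{\pi}_{\nu_n}=\prod_{\vec{m}\in A_n}[\pi_{\vec{m}}]=\overline{\pi}_\mu=L$, contradicting $\nu_n\prec\mu=\min(I_L^{\overline{\pi}})$.

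I expect the main obstacle to be the finite product formula in the second step: it is the heart of the argument and must combine the hypothesis (to make $\mathcal{J}$ finite), translation invariance of $\prec$ (one cannot argue with coordinatewise comparison under the lexicographic order), the vanishing of a left‑normed commutator having a trivial entry, and the nilpotency of $G$ (to bound the lengths of relevant tuples). Once that identity is established, the stabilization argument is routine because $\mathcal{M}$ is finite.
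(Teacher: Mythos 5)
Your proof is correct, but it takes a genuinely different route from the paper's. The paper argues one subgroup at a time: for $H=\overline{\pi}_s$ it extracts a finite \emph{minimal} set $\mathcal{X}$ of partitions whose commutators generate $H$, replaces each entry by its maximal index to form $\mathcal{X}'$, and exhibits $\max(I_H)$ directly as $t=\min_{P\in\mathcal{X}'}\sum_i t_i$, proving maximality by showing that any larger index would render some partitions of $\mathcal{X}$ superfluous, contradicting minimality. You instead prove a single global finite rewriting of the closure formula (\ref{eqn:closure}) --- a product over tuples drawn from the finite set $\mathcal{J}$ of maximal indices, of length bounded by the nilpotency class --- and then combine Lemma \ref{lem:max}, applied to the filter $\overline{\pi}$, with a stabilization argument along a cofinal sequence below $\min(I_L)$. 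You share with the paper the key step of replacing partition entries by the indices $m_H$ via translation-invariance of the lexicographic order, but the payoffs differ: the paper's construction names the maximal index explicitly (which is then reused in the correctness argument for {\sc Generate}), whereas your version avoids the somewhat delicate superfluousness step, isolates the order-theoretic content cleanly in the least-upper-bound computation, and produces a finite product formula that is of independent interest (it is essentially why {\sc Generate} can be polynomial time). Two small points to tidy: the assertion $\overline{\pi}_0=G$ rests on the standing convention $\pi_0=G$ (which the paper uses throughout but does not state in the prefilter axioms), and the parenthetical about discarding zero entries is unnecessary --- the length bound via $\gamma_k(G)=1$ already handles tuples containing $m_G$, and a zero entry does not obviously enlarge a left-normed commutator in the way you claim.
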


\begin{proof}
Let $H\in\im(\overline{\pi})$ and $s\in\mathbb{N}^d$ such that $\overline{\pi}_{s}=H$. 
Since $G$ is finite, there exists a finite and minimal $\mathcal{X}\subset \mathcal{P}_X(s)$ such that 
\[ H=\prod_{P\in\mathcal{X}}[\pi_P].\]
Let $\mathcal{X}'$ be the set of all partitions $P\in\mathcal{X}$, where each $s_i$ is replaced by its corresponding maximal index. 
That is, if $P=(s_1,...,s_k)\in\mathcal{X}$ and $H_i=\pi_{s_i}$, then define $P'=(\max(I_{H_i}))_{i=1}^k$, so $\mathcal{X}'=\{ P' : P \in\mathcal{X}\}$.
Observe that $|\mathcal{X}|=|\mathcal{X}'|$ and 
\[ H=\prod_{P\in\mathcal{X}}[\pi_P]=\prod_{P'\in\mathcal{X}'}[\pi_{P'}];\]
however, $\mathcal{X}'$ need not be contained in $\mathcal{P}_X(s)$.

Define 
\[ t = \min_{(t_1,...,t_k)\in\mathcal{X}'}\left(\sum_{i=1}^kt_i\right).\]
Such a minimum exists since $\mathbb{N}^d$ is totally ordered.
For $P=(t_1,...,t_k)\in\mathcal{X}'$, let $u=\sum_it_i$.
Because $t\preceq u$, it follows that $\overline{\pi}_t\geq \overline{\pi}_u$.
Therefore, $\overline{\pi}_{t}=H$. 

Let $u\in\mathbb{N}^d$ such that $t\prec u$. 
Define 
\[ \mathcal{Y}= \left\{ P\in\mathcal{X}' : u\preceq \sum_it_i\right\}\quad \text{and} \quad \mathcal{Z}= \left\{ P\in\mathcal{X}' : u\succ \sum_it_i \right\};\] 
note that $\mathcal{Z}$ is nonempty. If 
\[ H = \prod_{P\in\mathcal{Y}} [\pi_P] ,\]
then the partitions in $\mathcal{Z}$ are superfluous. 
Thus, there exists a smaller set of partitions $\mathcal{Y}$, which contradicts the minimality of $\mathcal{X}$. 
Hence, $t$ is the maximal index for $H$.
\end{proof}

Now we provide a polynomial-time algorithm for computing filters from prefilters, and hence prove Theorem \ref{thm:main}.

\begin{thm}\label{thm:poly}
There exists a polynomial-time algorithm for {\sc Generate}.
\end{thm}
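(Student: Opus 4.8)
My plan is to compute $\overline\pi$ as the stable value of a fixed-point iteration that, in $O(\log|G|)$ rounds, accumulates every iterated bracket occurring in~(\ref{eqn:closure}). I would first record the characterization that $\overline\pi$ is the pointwise smallest filter $\phi\colon\mathbb N^d\to 2^G$ with $\pi_s\le\phi_s$ for all $s$: iterating $[\phi_u,\phi_v]\le\phi_{u+v}$ yields $[\pi_P]\le[\phi_{s_1},\dots,\phi_{s_k}]\le\phi_s$ for each $P=(s_1,\dots,s_k)\in\mathcal P_X(s)$, so $\phi_s\ge\overline\pi_s$, while $\overline\pi$ itself is such a filter by \cite[Theorem~3.3]{W:Char}. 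By Lemma~\ref{lem:max-index} the closure again has $\sign=1$, so the task reduces to producing the $O(\log|G|)$ pairs $(m_H,H)$ of its data structure.

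For the iteration I take $\psi^{(0)}$ to be the filter equal to $\pi_0$ at index $0$ and trivial elsewhere, and, given $\psi^{(i)}$ with terms $K_0>\dots>K_r$ at maximal indices $m_0\prec\dots\prec m_r$, I set
\[
\psi^{(i+1)}_s \;=\; \psi^{(i)}_s\cdot\prod_{\substack{(N,\tau)\in\mathcal C\\ s\preceq\tau}} N,
\]
where $\mathcal C$ collects the pairs $\bigl([K_a,K_b],\,m_a+m_b\bigr)$ for $0\le a,b\le r$ together with the pairs $\bigl(H,\,m^\pi_H\bigr)$ for each $1\ne H\in\im\pi$, $m^\pi_H$ denoting the maximal index of $H$ in $\pi$. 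Because $\mathcal C$ and the jumps of $\psi^{(i)}$ are finitely many fixed vectors, $\psi^{(i+1)}$ is order reversing, has normal values, and has every nontrivial value occurring on a lexicographic interval possessing a maximum; in practice it is built by sorting $\mathcal C$ together with the jumps of $\psi^{(i)}$ by their threshold, sweeping through once while maintaining a running product of subgroups, and merging equal consecutive values. Each round thus costs a polynomial number of commutator subgroups, subgroup orders, normal closures, and comparisons in $\mathbb N^d$.

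Correctness rests on a two-sided induction. On one side $\psi^{(i)}\le\overline\pi$ pointwise, since any factor $[K_a,K_b]$ with $s\preceq m_a+m_b$ lies in $[\overline\pi_{m_a},\overline\pi_{m_b}]\le\overline\pi_{m_a+m_b}\le\overline\pi_s$ and any $H$ with $s\preceq m^\pi_H$ lies in $\overline\pi_{m^\pi_H}\le\overline\pi_s$. On the other side $\psi^{(i)}_s\supseteq\prod\{[\pi_P]:P\in\mathcal P_X(t),\,|P|\le i,\,t\succeq s\}$ for $i\ge 1$: the inductive step writes $P=(s_1,\dots,s_k)$ with $2\le k\le i+1$ as $[\pi_P]=[[\pi_{(s_1,\dots,s_{k-1})}],\pi_{s_k}]$, observes that $[\pi_{(s_1,\dots,s_{k-1})}]\le\psi^{(i)}_u=K_a$ and $\pi_{s_k}\le\psi^{(i)}_{s_k}=K_b$ where $u=\sum_{j<k}s_j$, and uses $m_a+m_b\succeq u+s_k=\sum P\succeq s$ so that $[K_a,K_b]$ is a factor of $\psi^{(i+1)}_s$, while length-$1$ partitions come from the $(H,m^\pi_H)$-part of $\mathcal C$. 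As $[\pi_P]=1$ once $|P|$ exceeds the nilpotency class $c\le\log_p|G|$, after $c$ rounds $\psi^{(c)}\ge\overline\pi$ and hence $\psi^{(c)}=\overline\pi$; equivalently the iteration stabilizes, and at a stable $\psi$ the inequalities $[K_a,K_b]\le\psi_{m_a+m_b}$ and $\pi_s\le\psi_s$ say exactly that $\psi$ is a filter above $\pi$. Polynomiality follows because every object handled is a subgroup of $G$ or a vector of $\mathbb N^d$ of polynomial bit length (all nontrivial indices are $\preceq c\cdot m$ for $m$ the largest index of $\pi$, as $\overline\pi$ vanishes beyond there), and there are $O(\log|G|)$ rounds; and Theorem~\ref{thm:main} then follows by running {\sc Generate} on the prefilter obtained from $\phi$ by resetting its value at $s$ to $H$.

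The part I expect to demand the most care is not the group theory—the weight bound $|P|\le c$ and the reduction of left-normed brackets are routine—but the lexicographic bookkeeping: checking that each $\psi^{(i+1)}$ still fits the finite full-filter data structure (order reversal, existence of maximal indices, boundedly many jumps), which is precisely what Lemma~\ref{lem:max}, Lemma~\ref{lem:max-index}, and Theorem~\ref{thm:full} are set up to provide.
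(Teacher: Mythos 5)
Your proof is correct, but it takes a genuinely different route from the paper's. The paper runs an agenda-driven transitive closure directly on the signed-set data structure: it repeatedly selects a pair of stored subgroups $(x,H),(y,K)$ whose commutator has not yet been formed (with $x+y$ minimal), inserts $[H,K]$ at index $x+y$, pushes it into all smaller indices, and merges duplicates; correctness is argued by showing that every left-normed bracket $[\pi_P]$ is eventually produced and that the maximal indices agree with those guaranteed by Lemma \ref{lem:max-index}, for a total of $O(\log^2|G|)$ commutator and order computations. Your round-based fixed-point iteration replaces this with a two-sided sandwich: the observation that $\overline{\pi}$ is the \emph{least} filter dominating $\pi$ gives the upper bound $\psi^{(i)}\le\overline{\pi}$ essentially for free (the paper states no such characterization and instead leans on Lemma \ref{lem:max-index} to control the indices), while your induction on bracket length shows that every $[\pi_P]$ with $|P|\le i$ has been absorbed by round $i$; this cleanly separates ``no overshoot'' from ``no undershoot'' and makes the index bookkeeping a corollary rather than a separate argument. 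The price is threefold: you recompute commutators of the same pair of subgroups in successive rounds, giving $O(\log^3|G|)$ rather than $O(\log^2|G|)$ commutator computations (still polynomial, so the theorem is unaffected); your termination bound of $c\le\log_p|G|$ rounds invokes nilpotency of $G$, which is implicit in the paper's $p$-group setting but is not needed for the paper's count of distinct subgroup pairs; and your closing derivation of Theorem \ref{thm:main} by ``resetting the value at $s$ to $H$'' is not quite the right refinement --- one must pass to $\mathbb{N}^{d+1}$ and insert $H$ at a fresh index strictly between $m_{\phi_s}$ and its successor, as in the paper's worked example, or else $\phi_s$ itself may drop out of the image. None of these points affects the validity of your proof of Theorem \ref{thm:poly}.
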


\emph{Algorithm}. 
We perform a transitive closure.
Start with $\mathcal{S}=\{ (m_H, H ) : 1\ne H \in\im(\pi)\}$. 
We loop through all possible pairs $(x,H),(y,K)\in\mathcal{S}$ such that $[H,K]$ has not previously been computed, selecting pairs where $x+y$ is minimized.
Suppose $(x,H)$ and $(y,K)$ are such pairs.
Let $s= x + y$, and let $(t,L)\in\mathcal{S}$ such that $t$ is the smallest index with $s\preceq t$.
Now we update $\mathcal{S}$; set
\[ \mathcal{S} = \left\{ \begin{array}{ll} (\mathcal{S}-\{ (s,L)\} )\cup \{ (s,[H,K]L)\} & \text{if } s=t,\\
\mathcal{S} \cup \{ (s,[H,K]L)\} & \text{if } s\ne t.\end{array}\right.\]
Furthermore, for every $(u,X) \in\mathcal{S}$ where $u\prec s$, set
\[ \mathcal{S} = (\mathcal{S}-\{ (u,X) \}) \cup \{ (u,X[H,K]L) \}.\]

At this stage in the loop, it is possible to have duplicate groups in $\mathcal{S}$. 
We merge them, keeping the largest index, using the order of the subgroup to determine if the groups are equal. 

Now we are back to searching for pairs $(x,H),(y,K)\in\mathcal{S}$ such that $[H,K]$ has not been computed previously.
If there is another pair, then we remain in the loop.
Otherwise, we are done, and we return the filter $(1,\mathcal{S})$.\qed

\emph{Correctness}. 
Let $\phi=$ {\sc Generate}$(\pi)$. Let $s\in\mathbb{N}^d$ and set $H=\overline{\pi}_s$.
There exists a minimal (and finite) $\mathcal{X}\subset \mathcal{P}_{\mathbb{N}^d}(s)$ such that
\[ H = \prod_{P\in \mathcal{X}}[\pi_P].\]
If $P=(s_1,...,s_k)\in\mathcal{X}$, then for $1<j<k$, we assume $[\pi_{s_1},...,\pi_{s_j}]\not\in\im(\pi)$. 
Otherwise, replace $P$ with $(s_1+\cdots +s_j,s_{j+1},...,s_k)$.

At the start of the algorithm, $X=[\pi_{s_1},\pi_{s_2}]$ has not been computed, so $(s_1+s_2,X)$ gets inserted into $\mathcal{S}$.
Therefore, for $j\geq 2$, 
\[ (s_{j+1},\pi_{s_{j+1}}),(s_1+\cdots +s_j,[\pi_{s_1},...,\pi_{s_j}])\in\mathcal{S},\] 
but $[[\pi_{s_1},...,\pi_{s_j}],\pi_{s_{j+1}}]$ has not been computed. 
Thus, insert 
\[ (s_1+\cdots +s_{j+1}, [\pi_{s_1},...,\pi_{s_{j+1}}])\] 
into $\mathcal{S}$.
Therefore, $[\pi_P]$ gets computed by the algorithm.
Hence, $\im(\phi)=\im(\overline{\pi})$.

Fix $H\in\im(\phi)$. Let $s\in\mathbb{N}^d$ be maximal such that $\phi_s=H$. 
By Lemma \ref{lem:max-index}, there exists a maximal $t\in\mathbb{N}^d$ such that $\overline{\pi}_t=H$. 
By the definition of $\overline{\pi}$ from equation (\ref{eqn:closure}), it follows that $t\preceq s$.
However, for every $P\in\mathcal{P}_{\mathbb{N}^d}(t)$, the algorithm computes the group $[\pi_P]$, so $s\preceq t$.
Therefore, $\phi=\overline{\pi}$.\qed

\emph{Timing}. Since $\phi$ is totally ordered, $|\im(\phi)|\leq \log|G|$. 
Hence, we compute $O(\log^2|G|)$ commutator subgroups and the orders of $O(\log^2|G|)$ subgroups.\qed

\subsection{An example}

We demonstrate how the algorithm generates a filter from a prefilter. 
Suppose $G$ is the group of upper unitriangular $5\times 5$ matrices over the finite field $\mathbb{Z}_p$. 
Let $\gamma: \mathbb{N}\rightarrow 2^G$ be the filter obtained from the lower central series of $G$, where $\gamma_0=G$.
Note that $G$ is generated by $g_i = I_d + E_{i,i+1}$ for $1\leq i \leq 4$; 
$E_{ij}$ is the matrix with $1$ in the $(i,j)$ entry and $0$ elsewhere.

$G$ has a characteristic subgroup $H=\langle g_1, g_4, \gamma_2\rangle$, where $\gamma_1>H>\gamma_2$.
We construct a prefilter from $\gamma$ to include $H$. 
Define $\pi:\mathbb{N}^2\rightarrow 2^G$ where 
\begin{equation}\label{eqn:pi} 
\pi = \left(1, \left\{ \big( (1,0), G \big), \big( (1,1), H \big), \big( (2,0), \gamma_2 \big), \big( (3,0), \gamma_3 \big), \big( (4,0), \gamma_4 \big) \right\} \right).
\end{equation}

Now we want to construct $\phi=$ {\sc Generate}$(\pi)$.
We initialize $\mathcal{S}$ to be the set $\pi$ given in equation (\ref{eqn:pi}).
We run through all pairs in $\mathcal{S}$. 
The first pair, $\big((1,0),G\big)$ and $\big((1,0),G\big)$, provides no new information.
The next pair is $\big((1,0),G\big)$ and $\big((1,1),H\big)$.
Set $s=(2,1)$, so that $t=(3,0)$. 
Since $s\ne t$, we include the new subgroup $X=[G,H]\gamma_3$ in $\mathcal{S}$. 
Thus, 
\[ \mathcal{S} = \mathcal{S} \cup \left\{ \big( (2,1), X \big) \right\}.\]
We have no duplicate groups in $\mathcal{S}$ as $X\ne \gamma_i$ for $i\in\{2,3\}$. 
Therefore, we continue looping through pairs.

The next pair to consider is $\big((1,1),H\big)$ and $\big((1,1),H\big)$.
Therefore $s=(2,2)$ and so $t=(3,0)$. Since $s\ne t$, we include the subgroup $Y=[H,H]\gamma_3$ into $\mathcal{S}$, so
\[ \mathcal{S} = \mathcal{S} \cup \left\{ \big( (2,2), Y \big) \right\}.\]
Since $Y=\gamma_3$, we have duplicate groups in $\mathcal{S}$: $\big((2,2),\gamma_3\big)$ and $\big((3,0),\gamma_3\big)$.
Because $(2,2)\prec (3,0)$, we remove the entry with $(2,2)$ and only keep the entry with $(3,0)$.
Therefore, at this stage,
\begin{equation*}
\mathcal{S} = \left\{ \big( (1,0), G\big), \big( (1,1), H\big), \big( (2,0), \gamma_2\big), \big( (2,1), X\big), \big( (3,0), \gamma_3\big), \big( (4,0), \gamma_4\big) \right\}.
\end{equation*}

The next pair to consider is $\big((1,0),G\big)$ and $\big((2,0),\gamma_2\big)$ which, again, results in no new information.
Computing the commutator of $H$ with $\gamma_2$ yields $\big( (3,1), \gamma_3 \big)$.
This gets included in $\mathcal{S}$ because $(3,1)$ is not already included, but $\big( (3,0), \gamma_3 \big)$ and $\big( (3,1), \gamma_3 \big)$ are duplicate groups.
We remove $\big( (3,0), \gamma_3 \big)$ from $\mathcal{S}$.

The remaining computations provide no new subgroups, but continue to update the indices. The resulting set is 
\[ \mathcal{S} = \left\{ \big( (1,0), G\big), \big( (1,1), H\big), \big( (2,0), \gamma_2\big), \big( (2,1), X\big), \big( (3,1), \gamma_3\big), \big( (4,2), \gamma_4\big) \right\}. \]

\section{Refinements}\label{refinements}

In \cite{W:Char,W:filters}, sources of new subgroups were suggested. 
We use these in our testing, but we remark that our methods can use any source of refinements, and hence, any prefilter.

Suppose we start with the filter $\eta:\mathbb{N}\rightarrow 2^G$ given by the exponent $p$-central series of $G$. 
Then $L(\eta)$ has an associated $\mathbb{N}$-graded Lie algebra, which yields $\mathbb{Z}_p$-bilinear maps from the graded product (e.g.\!\! $[,] : L_s\times L_t \rightarrowtail L_{s+t}$). 
We turn to some associated algebras for these bilinear maps. 
Suppose $\circ : U \times V\rightarrowtail W$ is a biadditive map of abelian groups; define the adjoint, centroid, derivation, left scalar, and right scalar rings as
\begin{align*}
\begin{split}
\Adj( \circ ) &= \{ (f,g) \in \End(U)\times \End(V)^{\text{op}} : \forall u\in U, \forall v\in V, (uf) \circ v = u \circ (gv) \},\\
\Cent( \circ ) &= \{ (f,g,h) \in \End(U)\times \End(V)\times \End(W) : \forall u\in U, \forall v\in V, \forall w\in W, \\
&\qquad (uf) \circ v = u \circ (vg) = (u\circ v)h \}, \\
\Der( \circ ) &= \{ (f,g,h) \in \mathfrak{gl}(U)\times \mathfrak{gl}(V)\times \mathfrak{gl}(W) : \forall u\in U, \forall v\in V, \forall w\in W, \\
&\qquad (uf) \circ v + u \circ (vg) = (u\circ v)h \},\\
\mathcal{L}(\circ) &= \{ (f,g) \in \End(U)^{\text{op}}\times \End(W)^{\text{op}} : \forall u\in U, \forall w\in W, (fu) \circ v = g(u \circ v) \}, \text{ and }\\
\mathcal{R}(\circ) &= \{ (f,g) \in \End(V)\times \End(W) : \forall v\in V, \forall w\in W, u\circ (vf) = (u\circ v)g \}.
\end{split}
\end{align*}

It is in these nonassociative rings we begin to find more characteristic structure in $G$ \cite{W:Char, W:filters}. 
For example, the Jacobson radical acts on the homogeneous components and yields characteristic subgroups (for $\Der(\circ)$, this is done in the associative enveloping algebra).

\section{Examples}\label{examples}

\begin{ex}
We consider a $p$-group found in \cite[Section 12.1]{ELGO:Auts} and refine its lower central series by adding 5 subgroups. 
Define $G$ by the following power-commutator presentation where the missing commutator relations are assumed to be trivial
\begin{align*} 
G = \langle g_1,...,g_{13} &\mid [g_{10},g_6]=g_{11}, [g_{10},g_7] = g_{12},\\
&\;\;\; [g_2,g_1]=[g_4,g_3]=[g_6,g_5]=[g_8,g_7]=[g_{10},g_9]=g_{13}, \text{exponent }p\rangle.
\end{align*}

Let $L(\gamma)$ be the $\mathbb{N}$-graded Lie algebra associated to the lower central series of $G$. 
Let $\circ : L_1\times L_1\rightarrowtail L_2$ be the nontrivial graded product in $L(\gamma)$.
The adjoint algebra $A$ of $\circ$ is a 53 dimensional algebra with a nontrivial Jacobson radical $J$ of dimension 35.
Table \ref{tab:char} shows the number of new subgroups added, and Table \ref{tab:filter} shows the resulting filter.

\begin{table}[h]
\centering
\begin{tabular}{cccc}
$i$ & $\dim(J^i/J^{i+1})$ & $\dim(L_1J^i / L_1J^{i+1})$ & $[\phi_{(1,i)} : \phi_{(1,i+1)} ]$ \\ \hline
0 & 18 & 1 & $p$ \\
1 & 18 & 2 & $p^2$ \\
2 & 12 & 4 & $p^4$ \\ 
3 & 4 & 2 & $p^2$ \\
4 & 1 & 1 & $p$ \\ 
5 & 0 & 0 & 1 
\end{tabular}
\caption{The Jacobson radical of $\Adj(\circ)$ adds four new subgroups.}\label{tab:char}
\end{table}

\begin{table}[h]
\centering
\begin{tabular}{cccc}
Maximal Index & Origin of Subgroup & Order & Subgroup \\ \hline
$(1,0)$ & $G$ & $p^{13}$ & $\langle g_1, ..., g_{13} \rangle$ \\
$(1,1)$ & $J$ & $p^{12}$ & $\langle g_1,...,g_9,g_{11},...,g_{13} \rangle$ \\
$(1,2)$ & $J^2$ & $p^{10}$ & $\langle g_1,...,g_5,g_8,g_9,g_{11},...,g_{13} \rangle$ \\
$(1,3)$ & $J^3$ & $p^6$ & $\langle g_5,g_8,g_9,g_{11},...,g_{13} \rangle$ \\
$(1,4)$ & $J^4$ & $p^4$ & $\langle g_9,g_{11},...,g_{13}\rangle$\\
$(2,1)$ & $G'$ & $p^3$ & $\langle g_{11},...,g_{13}\rangle$\\
$(2,4)$ & generated & $p$ & $\langle g_{13}\rangle$ 
\end{tabular}
\caption{The resulting filter has length 7; nearly a four-fold increase in characteristic subgroups.}\label{tab:filter}
\end{table}

\end{ex}

\begin{ex}
We consider $G=$ {\sc SmallGroup}(512, $3\, 000\, 000$), which is $p$-class 2 \cite{EO:512}.
There is a nontrivial derivation refinement which yields three new subgroups.
The generation phase produces an additional three more characteristic subgroups, so after one refinement we have eight nontrivial subgroups in the filter.
The resulting filter has an adjoint refinement which outputs one new subgroup. 
The generation phase cannot generate additional subgroups because our prefilter is a composition series, so it just updates the indices.
\end{ex}

\begin{ex}
Let $G$ be a Sylow $2$-subgroup of $S_{100}$; it has order $2^{97}$ and is $p$-class 32.
Randomly choosing between adjoint and derivation refinements, we run through nine iterations and add 40 new subgroups.
Our resulting filter, $\phi: \mathbb{N}^{10}\rightarrow 2^G$, is constructed in less than 2 minutes and has 72 nontrivial subgroups.
\end{ex}

\begin{ex}
We look at a random sample of 2,000 sections of the Sylow $3$-subgroups of classical groups with Lie rank 15. 
We record how many new subgroups were found, relative to how many we started with, and the time it took to construct these filters. 
Scatter plots of these data are seen in Figure \ref{fig:RandomGroups}.

\begin{figure}[h]
\centering
\subcaptionbox{ 
We report on the growth, relative to the $p$-class, of each of the refinements. For large groups in this sample, we often find refinements. In addition, we plot the linear regression given by least squares.}
{\resizebox{0.85\linewidth}{!}{\input{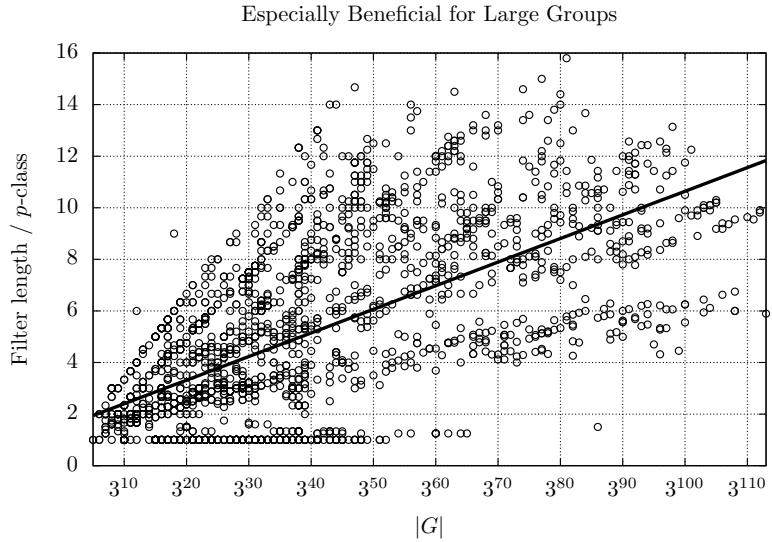}}}
\subcaptionbox{We record the total CPU time required to refine the filters. 
For the larger groups, as many as 20 iterations of {\sc Generate} were required.}
{\resizebox{0.85\linewidth}{!}{\input{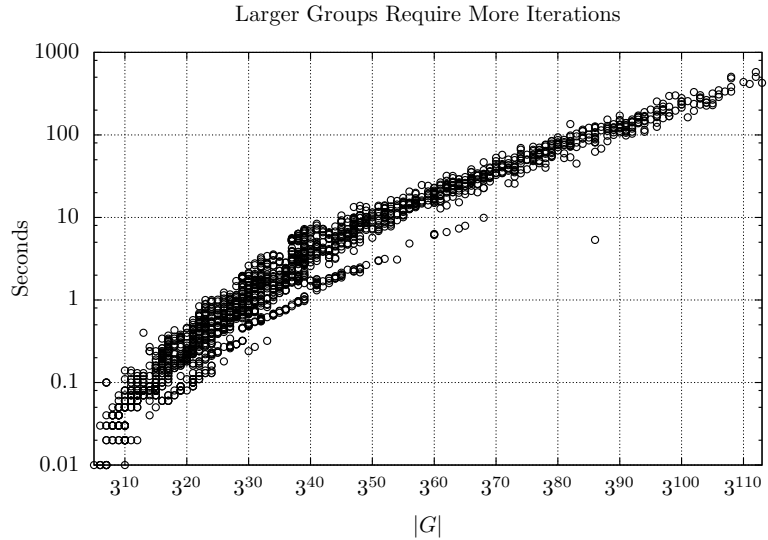}}}
\caption{We sample 2,000 sections of the Sylow 3-subgroups of groups of Lie type.
We refine filters until all the algebras in section \ref{refinements} are semisimple.}\label{fig:RandomGroups}
\end{figure}

\end{ex}

\section{Closing Remarks}

An obvious question might be to consider filters whose monoids are not totally ordered.
While there are benefits to this, there are some issues that have to be resolved.
For example, the associated Lie ring does not have to have the same order as the group. 
Indeed, it can be either larger or smaller.
Another issue to resolve is the data structure of a filter.
Our solution for filters with totally ordered monoids does not readily apply in the general context. 
One option might be to work with a finite monoid instead of $\mathbb{N}^d$.

A prototype {\sc Magma} implementation for filters is available from the author. 

\section*{Acknowledgements}
The author is thankful to James B. Wilson and Alexander Hulpke for insightful feedback and encouragement
and to the anonymous referee for helpful comments.

\bibliographystyle{alpha}
\bibliography{EfficientFilters}

\end{document}